\newtheorem{theorem}{Theorem}
\newtheorem{corollary}{Corollary}
\theoremstyle{definition}
\theoremstyle{plane}
\def \beq{ \begin{equation} }
\def \eeq{\end{equation}}
\title{Rectangular orbits of the curved 4-body problem}
\begin{document}
\maketitle
\markboth{Florin Diacu and Brendan Thorn}{Rectangular orbits of the curved $4$-body problem}
\author{\begin{center}
{\bf Florin Diacu}$^{1,2}$ and {\bf Brendan Thorn}$^2$\\
\smallskip
{\footnotesize $^1$Pacific Institute for the Mathematical Sciences\\
and\\
$^2$Department of Mathematics and Statistics\\
University of Victoria\\
P.O.~Box 3060 STN CSC\\
Victoria, BC, Canada, V8W 3R4\\
diacu@uvic.ca and bthorn@uvic.ca\\
}\end{center}

}

\vskip0.5cm

\begin{center}
\today
\end{center}

\begin{abstract}
We consider the $4$-body problem in spaces of constant curvature and study the existence of spherical and hyperbolic rectangular solutions, i.e.\ equiangular quadrilateral motions on spheres and hyperbolic spheres. 
We focus on relative equilibria (orbits that 
maintain constant mutual distances) and rotopulsators (configurations that rotate and change size, but preserve equiangularity). We prove that when such orbits exist, they are necessarily spherical or hyperbolic squares, i.e.\ equiangular equilateral quadrilaterals.
\end{abstract}

\section{Introduction}

We consider the curved 4-body problem, given by the differential equations that describe the motion of 4 point masses in spaces of constant Gaussian curvature: 2- and 3-spheres for positive curvature and hyperbolic 2- and 3-spheres for negative curvature. Using suitable transformations, the study of the problem can be reduced to  $\mathbb S^2, \mathbb S^3, \mathbb H^2$, and $\mathbb H^3$, respectively, \cite{Diacu03}. These equations provide a natural extension of the classical Newtonian equations from Euclidean space. The curved 2-body problem has a long history, starting with Bolyai, Lobachevsky, Dirichlet, Lipschitz, Killing, Liebmann, and others, \cite{Diacu03}, \cite{Diacu04}. The general case of N bodies, $N\ge 3$, has been recently developed in a suitable framework, \cite{Diacu01}, \cite{Diacu02}, \cite{Diacu03}, \cite{Diacu04}, \cite{Diacu05},
\cite{Diacu06}, \cite{Diacu07}, \cite{Diacu08}, \cite{Diacu09},
\cite{Diacu10}, \cite{Perez}, \cite{Tibboel}.

The goal of this paper is to study the existence of classes of solutions of the curved 4-body problem that (with one exception) maintain spherical or hyperbolic rectangular configurations (i.e.\ form equiangular quadrilaterals) all along the motion.  Our results prove that when such orbits exist, they are necessarily spherical or hyperbolic squares, i.e.\ equiangular equilateral quadrilaterals.

We will be interested in both {\it relative equilibria}, i.e.\ orbits for which the mutual distances remain constant during the motion, and {\it rotopulsators}, i.e.\ systems that rotate and change size, but maintain a spherical or hyperbolic equiangular shape. We take exception from restricting ourselves to rectangles when studying relative equilibria along great circles of $\mathbb S^2$. Since antipodal configurations are singular for the equations of motion, we consider trapezoidal configurations in that case.

In Section 2, we introduce the equations of motion. In Section 3, we show that spherical trapezoidal (non-rectangular) relative equilibria do not exist when the bodies move along a great circle of $\mathbb S^2$. The nonexistence of spherical and hyperbolic rectangular non-square relative equilibria in $\mathbb S^2$ and $\mathbb H^2$ is proved in Section 4. In Sections 5 and 6 we show that when spherical and hyperbolic rectangular rotopulsators and relative equilibria exist in $\mathbb S^3$ and $\mathbb H^3$, they are necessarily spherical or hyperbolic squares,
a case in which the masses must be equal.


\section{Equations of motion}

Consider $4$ point particles (bodies) of masses $m_1, m_2, m_3, m_4>0$ moving in $\mathbb S^2$ or
$\mathbb{S}^3$ (thought as embedded in the ambient Euclidean space $\mathbb R^3$ or $\mathbb R^4$, respectively) or in $\mathbb H^2$ or $\mathbb H^3$ (embedded in the ambient Minkowski space $\mathbb R^{2,1}$ or $\mathbb R^{3,1}$, respectively),
where
$$
{\mathbb S}^3=\{(w,x,y,z)\ | \ w^2+x^2+y^2+z^2=1\}, \ \mathbb S^2=\{\mathbb S^3 \ {\rm with}\ w=0\},
$$
$$
{\mathbb H}^3=\{(w,x,y,z)\ | \ w^2+x^2+y^2-z^2=-1, \ z>0\}, \ \mathbb H^2=\{\mathbb H^3 \ {\rm with}\ w=0\}.
$$
Then the configuration of the system is described by the vector
$$
{\bf q}=({\bf q}_1,{\bf q}_2, {\bf q}_3,{\bf q}_4),
$$
where ${\bf q}_i=(w_i,x_i,y_i,z_i)$ in 3D and ${\bf q}_i=(x_i,y_i,z_i)$ in 2D,\ $ i=1,2,3,4$, denote the position vectors of the bodies.  The equations of motion (see \cite{Diacu04} or \cite{Diacu09} for their derivation using constrained Lagrangian dynamics) are given by the system
\begin{equation}
\label{both}
\ddot{\bf q}_i=\sum_{j=1,j\ne i}^N\frac{m_j[{\bf q}_j-\sigma({\bf q}_i\cdot {\bf q}_j){\bf q}_i]}{[\sigma-\sigma({\bf q}_i\cdot {\bf q}_j)^2]^{3/2}}-\sigma(\dot{\bf q}_i\cdot \dot{\bf q}_i){\bf q}_i, \ \ i=1,2,3,4,
\end{equation}
with initial-condition constraints
\begin{equation}
({\bf q}_i\cdot{\bf q}_i)(0)=\sigma, \ \ ({\bf q}_i\cdot\dot{\bf q}_i)(0)=0, \ \ i=1,2,3,4,
\end{equation}
where $\cdot$ is the standard inner product of signature $(+,+,+,+)$ in $\mathbb S^3\subset\mathbb R^4$ and $(+,+,+)$ in $\mathbb S^2\subset\mathbb R^3$, but the Lorentz inner product of signature $(+,+,+,-)$ in $\mathbb H^3\subset\mathbb R^{3,1}$ and $(+,+,-)$ in $\mathbb H^2\subset\mathbb R^{2,1}$, and
$\sigma=\pm 1$, depending on whether the curvature is positive or negative. Relative to the inner product, the gravitational force acting on each body has a tangential component (the above sum) and a radial component (the term involving the velocities).

From Noether's theorem, system \eqref{both} has the energy integral,
$$
T({\bf q},\dot{\bf q})-U({\bf q})=h,
$$
where 
$$U({\bf q})=\sum_{1\le i<j\le 4}\frac{\sigma m_im_j{\bf q}_i\cdot{\bf q}_j}{[\sigma-\sigma({\bf q}_i\cdot{\bf q}_j)^2]^{1/2}}$$ 
is the force function ($-U$ representing the potential),
$$
T({\bf q},\dot{\bf q})=\frac{1}{2}\sum_{i=1}^4m_i(\dot{\bf q}_i\cdot\dot{\bf q}_i)
(\sigma{\bf q}_i\cdot{\bf q}_i)
$$
is the kinetic energy, with $h$ representing an integration constant; and the integrals of the total angular momentum,
$$
\sum_{i=1}^4m_i{\bf q}_i\wedge\dot{\bf q}_i={\bf c},
$$
where $\wedge$ is the wedge product and ${\bf c}=(c_{wx},c_{wy},c_{wz},c_{xy},c_{xz},c_{yz})$ denotes an integration vector in 3D, whereas ${\bf c}=(c_{xy},c_{xz},c_{yz})$ is the integration vector in 2D, each component measuring the rotation of the system about the origin of the frame relative to the plane corresponding to the bottom indices (in 2D it is customary to express the rotation relative to an axis orthogonal to this plane).

\section{Trapezoidal fixed points in $\mathbb S^2$}

In this section we study the motion of the 4 bodies along great circles of $\mathbb S^2$. Notice, however, that the equations of motion lose meaning if at least a pair of bodies are antipodal, therefore rectangular orbits along great circles of $\mathbb S^2$ cannot exist. We will therefore check whether trapezoidal orbits occur. Of course, thought on $\mathbb S^2$, this figure would be degenerate, therefore we prefer to regard it as an Euclidean trapezoid in the plane of the equator, $z=0$. Due to symmetries, it is natural to restrict to the case when the masses on each of the parallel sides of the trapezoid are equal. We can now prove the following result, which refers to fixed points, i.e.\ static configurations.

\begin{theorem}
In the curved $4$-body problem in $\mathbb S^2$, there are no fixed points inscribed in any great circle, such that the bodies form a trapezoid with equal masses on each of the parallel sides, i.e.\  $m_1=m_2:=m>0$ and $m_3=m_4:=M>0$.
\end{theorem}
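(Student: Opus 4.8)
We're proving that no static (fixed point) trapezoidal configuration exists on a great circle of $\mathbb{S}^2$, with masses $m_1=m_2=m$, $m_3=m_4=M$ on the two parallel sides.

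**Key observations:**
1. A fixed point means $\ddot{\mathbf{q}}_i = 0$ for all $i$, and since velocities are zero, the radial term $\sigma(\dot{\mathbf{q}}_i \cdot \dot{\mathbf{q}}_i)\mathbf{q}_i = 0$.
2. So we need the tangential component of the force to vanish for each body.
3. On a great circle (the equator $z=0$), we can parametrize positions by angles. With $\sigma = +1$ on $\mathbb{S}^2$.

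**Parametrization.** On the equator, put bodies at angles. A trapezoid with symmetry: by the symmetry assumption (equal masses on parallel sides), place the configuration symmetrically. Say bodies 1,2 at angles $\pm\alpha$ and bodies 3,4 at angles $\pi \mp \beta$ or similar — need to set up a symmetric trapezoid.

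Let me write positions as $\mathbf{q}_i = (\cos\theta_i, \sin\theta_i, 0)$.

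**The force equations.** The condition $\ddot{\mathbf{q}}_i = 0$ requires
$$\sum_{j \neq i} \frac{m_j[\mathbf{q}_j - (\mathbf{q}_i \cdot \mathbf{q}_j)\mathbf{q}_i]}{[1-(\mathbf{q}_i\cdot\mathbf{q}_j)^2]^{3/2}} = 0.$$

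Since $\mathbf{q}_i \cdot \mathbf{q}_j = \cos(\theta_i - \theta_j)$, and the bracketed vector $\mathbf{q}_j - \cos(\theta_{ij})\mathbf{q}_i$ is the tangential projection, pointing along the circle.

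**Main idea.** Project each force equation onto the tangent direction at body $i$. The tangential force on body $i$ reduces to a sum of terms like
$$\frac{m_j \sin(\theta_j - \theta_i)}{|\sin(\theta_i-\theta_j)|^3} = \frac{m_j \, \text{sign}(\sin\theta_{ji})}{\sin^2(\theta_{ij})}.$$

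So for each body, the net tangential force must vanish. This gives a system of equations relating the angular positions.

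**The contradiction.** I expect the hard part to be showing these equations are incompatible with a genuine trapezoid (non-square). With the symmetry, two of the four equations will be automatically satisfied (by reflection symmetry), leaving essentially two independent equations. These will force a relationship between $\alpha$ and $\beta$ that either collapses to the square case, or more likely shows no solution exists at all (because for a fixed point—as opposed to a relative equilibrium—there's no centrifugal balancing, so the tangential gravitational forces can't cancel for a proper trapezoid).

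**The likely mechanism:** For a static configuration, the tangential forces all pull bodies together along the circle with nothing to balance them (no rotation). I anticipate the proof shows that the only way to balance is by symmetry that forces collapse, contradiction arising because the distinct distances in a non-square trapezoid produce a nonzero net tangential force.
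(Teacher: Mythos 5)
Your setup is on the right track and in fact coincides with the paper's: you impose $\ddot{\bf q}_i=\dot{\bf q}_i=0$, note that the radial term drops out, project the force equations onto the tangent direction of the equator to get balance conditions with terms of the form $m_j\sin(\theta_j-\theta_i)/|\sin(\theta_j-\theta_i)|^3$, and observe that the reflection symmetry leaves essentially two independent equations. But the proposal stops exactly where the real work begins: everything after that is ``I expect,'' ``I anticipate,'' ``likely mechanism'' --- no contradiction is actually derived. The missing idea is that those two surviving equations form a \emph{linear homogeneous} system in the mass vector, ${\bf A}\,(m,M)^T={\bf 0}$, where ${\bf A}$ is a $2\times 2$ matrix depending only on the angular positions; nonexistence of a fixed point with $m,M>0$ then reduces to the concrete quantitative claim that $\det{\bf A}\neq 0$ for every admissible non-rectangular trapezoid. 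The paper proves this by parametrizing with $\alpha\in(0,\pi/2)$, $\beta\in(\pi,3\pi/2)$, $\beta-\alpha\neq\pi$, and showing by direct computation that $\det{\bf A}<0$ on this whole domain, forcing $m=M=0$. Without this (or an equivalent) computation, the theorem is not proved.

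Moreover, the qualitative mechanism you propose as a substitute is incorrect. It is not true that, absent rotation, attractive tangential forces along a great circle ``can't cancel'': they cancel perfectly well by symmetry. For example, three equal masses at the vertices of an equilateral triangle on a great circle form a genuine fixed point of the curved $3$-body problem, and the paper's own Corollary relies on the fact that fixed points are exactly what generate relative equilibria in $\mathbb S^2$. So no soft sign argument can rule out the trapezoid; the obstruction is genuinely quantitative (the determinant has a fixed sign), and the hypothesis of equal masses on each parallel side enters precisely through the reduction to a $2\times 2$ homogeneous system. A further small slip: a rectangle inscribed in a great circle has antipodal diagonal pairs and is therefore singular for the equations of motion, so ``collapses to the square case'' is not a possible alternative outcome here --- the configurations under consideration are non-rectangular by assumption, and the dichotomy is simply ``determinant nonzero, hence no positive-mass solution.''
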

\begin{proof}
Without loss of generality, consider an arbitrary non-rectangular trapezoid inscribed in the equator, $z=0$. Let the $y$-axis bisect the parallel chords connecting the bodies, and assume the shorter parallel chord to be on the positive $y$ side. All angles are measured from the positive $x$-axis. By thus fixing the coordinate system, each body $m_i$, $i=1,2,3,4$, is located in quadrant $i$ at initial position ${\bf q}_i(0)=(x_i(0),y_i(0))$. Then
\begin{equation}\label{results}
x_1(0)=-x_2(0),\ \ x_3(0)=-x_4(0),\ \ y_1(0)=y_2(0),\ \ y_3(0)=y_4(0).
\end{equation}
To form a fixed point of system \eqref{both}, the bodies $m_i$, $i=1,2,3,4$, must satisfy the initial conditions
$$
\ddot {\bf q}_i(0)= \dot {\bf q}_i(0)=0,\ \ i=1,2,3,4.
$$
Using equations \eqref{both} and eliminating the duplications given by linearly dependent equations, we are  led to the linear homogeneous system
\begin{equation}\label{final-sys}
{\bf A x}={\bf 0},
\end{equation}
where ${\bf A}=
\begin{bmatrix} \label{final-mat}
  q_{12}     & q_{13}+q_{14}\\ 
  q_{31}+q_{32} & q_{34} 
\end{bmatrix},$  ${\bf x}=(m,M)^T$, and, for $i,j=1,2,3,4,i\neq j$,
\begin{equation}
\label{expressions}
q_{ij}=\frac{x_j-a_{ij}x_i}{(1-a_{ij}^2)^{3/2}},\ \ a_{ij}=x_ix_j+y_iy_j.
\end{equation}
To prove the result, we will show that $\det{\bf A}<0$, where
\begin{equation}\label{determinant}
\det {\bf A}= q_{12}q_{34}-(q_{13}+q_{14})(q_{31}+q_{32})= q_{12}q_{34}-q_{13}q_{31}-q_{13}q_{32}-q_{14}q_{32}-q_{14}q_{31}. 
\end{equation}
For this, we first express $\det{\bf A}$ in terms of
\eqref{expressions}, and then convert the expression to polar coordinates that avoid any rectangular configuration, i.e.\ take
$$
x_1=\cos\alpha,\ y_1=\sin\alpha,\ x_3=\cos\beta,\ y_3=\sin\beta,
$$
with $\alpha\in(0,\pi/2)$ and $\beta\in(\pi,3\pi/2)$, but such that
$\beta-\alpha\ne\pi$. Some long but straightforward computations show that
\begin{equation}\label{pol-det}
\begin{split}
\det{\bf A}=&
\frac{\cos\alpha\cos\beta}{16|\cos\alpha|^3|\cos\beta|^3|\sin\alpha||\sin\beta|}+\\
&\frac{4\sin^2\alpha \sin^2\beta\cos\alpha\cos\beta[\sin^4(\alpha+\beta)-\sin^4(\alpha-\beta)]}{\sin^4(\alpha-\beta)\sin^4(\alpha+\beta)}.
\end{split}
\end{equation}
It is easy to see that, on the above domains of $\alpha$ and $\beta$, $\det{\bf A}$ is always negative, so the only solution of system \eqref{final-sys} is the trivial one, $m=M=0$. 
\end{proof}

Since relative equilibria can be generated only from fixed-point configurations by giving the particle system a rotation (see \cite{Diacu04} for a proof), an obvious consequence of this result is the following statement.

\begin{corollary}
In the curved $4$-body problem in $\mathbb S^2$, there are no trapezoidal relative equilibria generated from fixed points
if the masses on each of the parallel sides are equal.
\end{corollary}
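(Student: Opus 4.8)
The plan is to reduce the statement directly to the Theorem just proved, using the generating principle for relative equilibria. I would argue by contradiction, assuming that a trapezoidal relative equilibrium exists in $\mathbb S^2$ carrying equal masses $m_1=m_2:=m>0$ on one parallel side and $m_3=m_4:=M>0$ on the other, and that it is generated from a fixed point.

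First I would invoke the result of \cite{Diacu04}, by which every relative equilibrium of this kind is obtained from a static configuration --- a fixed point of system \eqref{both} --- by superimposing a uniform rotation about an axis through the origin. Since such a rotation is an isometry, it preserves the mutual distances, the angles of the quadrilateral, and the labeling of the masses; consequently the generating fixed point is itself a trapezoid inscribed in a great circle (the one orthogonal to the rotation axis), carrying the same masses $m$ and $M$ on its parallel sides. I would then apply the Theorem to this configuration, which rules out any fixed point of exactly this type by forcing $m=M=0$ in the linear system \eqref{final-sys}. This contradicts $m,M>0$, and the corollary follows.

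The argument is essentially immediate once the generating principle is available, so the only point requiring care is the claim that adjoining the rotation does not change the geometric type of the configuration --- that the fixed point inherits both the trapezoidal shape and the mass symmetry from the relative equilibrium. I expect this to be the sole (minor) subtlety, but it is settled by the observation that uniform rotations are isometries, hence preserve the metric and combinatorial data defining a trapezoid, while the mass assignment is a fixed labeling unaffected by the motion.
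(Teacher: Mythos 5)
Your proposal is correct and follows essentially the same route as the paper: the paper's entire justification is the one-line remark preceding the corollary, namely that relative equilibria can only be generated from fixed-point configurations by adjoining a rotation (citing \cite{Diacu04}), so the Theorem's nonexistence of trapezoidal fixed points with the stated mass symmetry immediately rules out such relative equilibria. Your contradiction argument merely spells out the details (rotation is an isometry, hence the generating fixed point inherits the trapezoidal shape and mass labeling) that the paper leaves implicit as ``an obvious consequence.''
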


\section{Rectangular relative equilibria in $\mathbb S^2$ and $\mathbb H^2$}

Since rectangular relative equilibria might exist in $\mathbb S^2$ if they stay away from great circles, we will further check this possibility. In $\mathbb H^2$ we have no restrictions such as the one for great circles in $\mathbb S^2$, so we treat the positive and the negative curvature simultaneously.

\begin{theorem}
In the curved $4$-body problem in $\mathbb S^2$ and $\mathbb H^2$, the only spherical and hyperbolic rectangular relative equilibria of equal masses, $m_1, m_2, m_3, m_4=:m>0$, occur when the rectangular is a square.
\end{theorem}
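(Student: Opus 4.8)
The plan is to set up the standard relative-equilibrium ansatz and then isolate a single scalar balance condition that forces the rectangle to be a square. A relative equilibrium is a rigid motion by a one-parameter group of isometries, so in $\mathbb S^2$ I take a rotation about the $z$-axis and in $\mathbb H^2$ an elliptic rotation about the $z$-axis. By the symmetry of a rectangle, the rotation axis should pass through its center, so I place all four equal masses on a common latitude circle, at height $z_0$ and horizontal radius $r$ (with $r=\sqrt{1-z_0^2}$ when $\sigma=1$ and $r=\sqrt{z_0^2-1}$ when $\sigma=-1$), at angular positions $\theta,\ \pi-\theta,\ \pi+\theta,\ -\theta$, so that $\theta=\pi/4$ is exactly the square. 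For such a rigid rotation one computes $\ddot{\q}_i=-\omega^2(x_i,y_i,0)$ and $\dot{\q}_i\cdot\dot{\q}_i=r^2\omega^2$, the latter computed with the relevant inner product. Recall that in $\mathbb S^2$ we may assume $z_0\neq 0$, since by Section~3 rectangles cannot live on great circles; in $\mathbb H^2$ one always has $z_0\geq 1>0$.

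The key simplification is to project the equations of motion \eqref{both} onto the azimuthal tangent direction $\mathbf t_i=(-y_i,x_i,0)$. Since $\q_i\cdot\mathbf t_i=0$ and $\ddot{\q}_i\cdot\mathbf t_i=0$ for both signatures, the radial term $\sigma(\dot{\q}_i\cdot\dot{\q}_i)\q_i$, the term $\sigma(\q_i\cdot\q_j)\q_i$, and the acceleration all drop out, leaving only the gravitational sum:
\begin{equation*}
\sum_{j\neq i}\frac{m_j\,(x_iy_j-x_jy_i)}{[\sigma-\sigma(\q_i\cdot\q_j)^2]^{3/2}}=0,\qquad i=1,2,3,4.
\end{equation*}
For body $1$ the diagonal partner (body $3$) satisfies $x_1y_3-x_3y_1=0$, so it contributes nothing, while the two sides give $x_1y_2-x_2y_1=r^2\sin 2\theta$ and $x_1y_4-x_4y_1=-r^2\sin 2\theta$. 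With equal masses the condition collapses to
\begin{equation*}
m\,r^2\sin 2\theta\left[\frac{1}{[\sigma-\sigma(\q_1\cdot\q_2)^2]^{3/2}}-\frac{1}{[\sigma-\sigma(\q_1\cdot\q_4)^2]^{3/2}}\right]=0 .
\end{equation*}

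Since $m>0$, $r\neq 0$, and a genuine (non-degenerate, non-collinear) rectangle has $\sin 2\theta\neq 0$, the bracket must vanish, forcing $(\q_1\cdot\q_2)^2=(\q_1\cdot\q_4)^2$. Using $\q_1\cdot\q_2=z_0^2-r^2\cos 2\theta$ and $\q_1\cdot\q_4=z_0^2+r^2\cos 2\theta$ in $\mathbb S^2$ (and the analogous expressions with $-z_0^2$ in $\mathbb H^2$), the difference of squares reduces to $4r^2z_0^2\cos 2\theta=0$. As $r\neq 0$ and $z_0\neq 0$, this yields $\cos 2\theta=0$, hence $\theta=\pi/4$, i.e.\ a square; the case $\sigma=-1$ is identical up to signs and gives the same conclusion, so both curvatures are handled uniformly. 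The main obstacle I anticipate is not the force computation, which the azimuthal projection renders transparent, but rather justifying the reduced ansatz rigorously: arguing that a rectangular relative equilibrium of equal masses must indeed be a rigid elliptic rotation with the four bodies on a single latitude circle symmetric about the rotation axis, and confirming the non-degeneracy hypotheses ($r\neq 0$, $z_0\neq 0$, $\sin 2\theta\neq 0$) under which the balance condition is meaningful; once these are in place the square is the only possibility.
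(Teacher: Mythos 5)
Your proposal is correct and follows essentially the same route as the paper: you impose the same rigid-rotation ansatz for the equal-mass rectangle and reduce everything to the single necessary condition $[\sigma-\sigma(\mathbf q_1\cdot\mathbf q_2)^2]^{3/2}=[\sigma-\sigma(\mathbf q_1\cdot\mathbf q_4)^2]^{3/2}$, i.e.\ equality of the two side interactions, which forces $\cos 2\theta=0$ and hence the square. The only cosmetic difference is the bookkeeping: you extract this condition by projecting \eqref{both} onto the azimuthal tangent direction, which eliminates $\omega^2$ from the outset, whereas the paper obtains the same identity by subtracting its two expressions \eqref{Amega_both} and \eqref{Bmega_both} for $\omega^2$; your version also makes the nondegeneracy assumptions ($r\neq 0$, $z_0\neq 0$, the latter justified by Section 3's exclusion of great-circle rectangles) explicit, which the paper handles implicitly by restricting to planes $z=c\neq 0$.
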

\begin{proof}
We will show that the only possible spherical and hyperbolic rectangular relative equilibria of equal masses in $\mathbb S^2$ and $\mathbb H^2$ are the squares, solutions whose existence was already proved in \cite{Diacu02}.

Notice first that we can write the angles between the particles and the $x$-axis viewed from the centre of the (Euclidean) rectangle in the plane $z=c,\ c\in(-1,0)\cup(0,1)$ as
$$
 \alpha_1=\alpha,\ \ \alpha_2=\pi-\alpha,\ \ \alpha_3=\pi+\alpha,\ \ \alpha_4=-\alpha,\ \ \alpha\in(0,\pi/2), 
$$
for the respective $m_i, \ i=1,2,3,4$. We are seeking solutions of system \eqref{both} of the form
\begin{equation}\label{releq_sols}
\begin{split}
{\bf q}=({\bf q}_1, {\bf q}_2, {\bf q}_3, {\bf q}_4), {\bf q}_i=(x_i,y_i,z_i),\hspace{3.8cm} \\
x_i=r\cos(\omega t+\alpha_i),\ y_i=r\sin(\omega t+\alpha_i),\ z_i=z=(1-r^2)^{1/2},\ \  i=1,2,3,4,
\end{split}
\end{equation}
where $\omega$ is the angular velocity of the point masses and $r$ is the radius of the circles formed by the intersection of $\mathbb S^2$ or $\mathbb H^2$ with the plane that contains the rectangle, which must be different from $z=0$.
Substituting a candidate solution of the form \eqref{releq_sols} into system \eqref{both}, we are led to the equations
\begin{equation}\label{Amega_both}
\begin{split}
&\omega^2=m\frac{[-1-(\sigma {\bf q}_1\cdot {\bf q}_2)]}{(\sigma r^2-1)[\sigma-\sigma(\sigma {\bf q}_1\cdot {\bf q}_2)^2]^{3/2}}+
\\&m\frac{[-1-(\sigma {\bf q}_1\cdot {\bf q}_3)]}{(\sigma r^2-1)[\sigma-\sigma(\sigma {\bf q}_1\cdot {\bf q}_3)^2]^{3/2}}+m\frac{[1-(\sigma {\bf q}_1\cdot {\bf q}_4)]}{(\sigma r^2-1)[\sigma-\sigma(\sigma {\bf q}_1\cdot {\bf q}_4)^2]^{3/2}},
\end{split}
\end{equation}
\begin{equation}\label{Bmega_both}
\begin{split}
&\omega^2=m\frac{[1-(\sigma {\bf q}_1\cdot {\bf q}_2)]}{(\sigma r^2-1)[\sigma-\sigma(\sigma {\bf q}_1\cdot {\bf q}_2)^2]^{3/2}}+
\\&m\frac{[-1-(\sigma {\bf q}_1\cdot {\bf q}_3)]}{(\sigma r^2-1)[\sigma-\sigma(\sigma {\bf q}_1\cdot {\bf q}_3)^2]^{3/2}}+m\frac{[-1-(\sigma {\bf q}_1\cdot {\bf q}_4)]}{(\sigma r^2-1)[\sigma-\sigma(\sigma {\bf q}_1\cdot {\bf q}_4)^2]^{3/2}},
\end{split}
\end{equation}
which must be simultaneously satisfied. By subtracting \eqref{Amega_both} from \eqref{Bmega_both}, we obtain
$$
[\sigma-\sigma(\sigma(1-2r^2\cos^2\alpha))^2]^{3/2}=[\sigma-\sigma(\sigma(1-2r^2\sin^2\alpha))^2]^{3/2},
$$
which is equivalent to $\cos^2\alpha = \sin^2\alpha$,
therefore satisfied only for $\alpha = \pi/4$.
\end{proof}

\section{Rectangular orbits in $\mathbb S^3$}

In this section we consider the two possible types of
spherical rectangular equal-mass orbits in $\mathbb S^3$, the positive elliptic, which have one elliptic rotation, and the positive elliptic-elliptic, which have two elliptic rotations, \cite{Diacu05}. In the former case we prove that rotopulsators of this kind exist only when the rectangle is a square. In the latter case we show that these orbits must be square relative equilibria. 

\begin{theorem}[{\bf Positive elliptic rectangular rotopulsators}] In the curved $4$-body problem in $\mathbb S^3$, the only spherical rectangular positive elliptic equal-mass rotopulsators are the spherical square rotopulsators. The orbit rotates relative to the plane $wx$, but has no rotation with respect to the planes $wy, wz, xy, xz$, and $yz$.
\end{theorem}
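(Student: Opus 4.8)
The plan is to substitute the general form of a positive elliptic rectangular rotopulsator into \eqref{both} with $\sigma=1$ and reduce it, exactly as in the proof of Theorem 2, to the shape condition $\cos^2\alpha=\sin^2\alpha$. A positive elliptic motion carries a single rotation, which I place in the $wx$-plane; allowing the configuration to pulsate, I take a common pulsating radius $r=r(t)$, a common height $z=z(t)=(1-r^2)^{1/2}$, a rotation angle $\theta=\theta(t)$, and the third coordinate $y_i\equiv 0$, with the equiangular shape carried by the offsets $\alpha_1=\alpha,\ \alpha_2=\pi-\alpha,\ \alpha_3=\pi+\alpha,\ \alpha_4=-\alpha$ of Section 4:
$$
{\bf q}_i=\big(r\cos(\theta+\alpha_i),\ r\sin(\theta+\alpha_i),\ 0,\ (1-r^2)^{1/2}\big),\quad i=1,2,3,4.
$$
This is the $\mathbb S^3$ analog of \eqref{releq_sols}, now with a time-dependent size and a possibly non-uniform rotation.

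First I would record that the inner products ${\bf q}_i\cdot{\bf q}_j=r^2\cos(\alpha_i-\alpha_j)+z^2$ collapse, by the rectangular symmetry, to the three values $1-2r^2\cos^2\alpha$, $1-2r^2\sin^2\alpha$, and $1-2r^2$ for the short sides, long sides, and diagonals, respectively. Differentiating the ansatz, the $w$- and $x$-accelerations of body $1$ split into a part proportional to $(\ddot r-r\dot\theta^2)$ along the position and a Coriolis part proportional to $2\dot r\dot\theta+r\ddot\theta=r^{-1}\frac{d}{dt}(r^2\dot\theta)$ along the orthogonal direction. Substituting into \eqref{both}, the $w$- and $x$-components of the gravitational force reproduce exactly the right-hand sides of \eqref{Amega_both} and \eqref{Bmega_both}, while the $y$-equation is satisfied identically because $y_j\equiv 0$.

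The decisive observation is that the angular-momentum integral $\sum_i m_i{\bf q}_i\wedge\dot{\bf q}_i={\bf c}$ forces the Coriolis terms to vanish: with $y_i\equiv 0$ and common $r,z$, the rectangular cancellations $\sum_i\cos(\theta+\alpha_i)=\sum_i\sin(\theta+\alpha_i)=0$ make every component of ${\bf c}$ vanish except $c_{wx}=4mr^2\dot\theta$, whose conservation gives $r^2\dot\theta=\text{const}$, that is, $2\dot r\dot\theta+r\ddot\theta=0$. This simultaneously establishes the stated rotation pattern (rotation only relative to $wx$) and removes the cross terms, so that the $w$- and $x$-equations of body $1$ both reduce to a single expression for $\ddot r-r\dot\theta^2$. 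Subtracting the two, exactly as \eqref{Amega_both} was subtracted from \eqref{Bmega_both}, the common dynamical part cancels and the diagonal contributions drop out, leaving
$$
[\sigma-\sigma(1-2r^2\sin^2\alpha)^2]^{3/2}=[\sigma-\sigma(1-2r^2\cos^2\alpha)^2]^{3/2}.
$$
Since $r\neq 1$ (the orbit avoids the great sphere $z=0$), this is equivalent to $\cos^2\alpha=\sin^2\alpha$, hence $\alpha=\pi/4$, the square.

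The main obstacle is the bookkeeping introduced by the pulsation: unlike the relative-equilibrium setting of Theorem 2, the accelerations now contain the Coriolis terms, and the argument closes only once these are shown to vanish, so I would carry out the angular-momentum reduction first and perform the subtraction afterwards. A secondary point requiring justification is that the general positive elliptic rectangular rotopulsator must indeed have the common radius, common height, and $y_i\equiv 0$ posited above; this I would deduce from the equal masses together with the requirement that the equiangular shape be preserved for all $t$, which forbids independent radial motions of the four bodies.
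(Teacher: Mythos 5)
Your core mechanism is the right one, and it is essentially the paper's: with a common circular radius, conservation of $c_{wx}$ kills the Coriolis term $2\dot r\dot\theta+r\ddot\theta$, and then the tangential component of the attraction on body $1$ must cancel; since the antipodal body $3$ contributes nothing tangentially, this forces $(1-\epsilon_{12}^2)^{3/2}=(1-\epsilon_{14}^2)^{3/2}$, which is exactly the paper's identity \eqref{identity-pe}, and after discarding the degenerate root $r=1$ one gets $\cos^2\alpha=\sin^2\alpha$, i.e.\ the square. That part of your argument is sound.

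The genuine gap is in your ansatz. A positive elliptic rotopulsator (Definition 1 in \cite{Diacu05}, and the form \eqref{rpesol} the paper actually works with) has $y_i=y(t)$ and $z_i=z(t)$ as \emph{functions of time} subject to $r^2+y^2+z^2=1$; your ansatz sets $y_i\equiv 0$, $z_i=(1-r^2)^{1/2}$, so it only covers candidates confined to the hyperplane $y=0$. Your proposed justification --- equal masses plus preservation of the equiangular shape --- does force the common $r$, $y$, $z$, but it cannot force $y\equiv 0$: to normalize $y\equiv 0$ by an isometry (a rotation in the $yz$-plane) you must first know that the direction of the vector $(y(t),z(t))$ is constant in time, i.e.\ that $y\dot z-\dot y z=0$. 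That is a dynamical fact, not a geometric one; the paper derives it from the equations of motion ($y\ddot z=\ddot y z$ from system \eqref{diff1}, hence $y\dot z-\dot y z=k$) together with $c_{yz}=0$, and it is precisely the content of the theorem's assertion that the orbit has no rotation relative to the $yz$-plane. By building $y\equiv 0$ into the ansatz you assume part of what must be proved, and your verification of the ``no rotation in the other planes'' clause becomes vacuous rather than a conclusion. A secondary omission: the theorem, as the paper proves it, also contains an existence statement --- after the square condition the paper reduces the dynamics to the system \eqref{diff3} and invokes standard existence and uniqueness theorems to produce a large family of square rotopulsators --- whereas your proposal establishes only the ``rectangles must be squares'' half; closing that half would also require the $z$-equation of motion, which your write-up never uses.
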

\begin{proof}
Consider equal masses, $m_1=m_2=m_3=m_4:=m>0$, and a candidate solution (see Definition 1 in \cite{Diacu05}) of the form
\begin{equation}\label{rpesol}
\begin{gathered}
{\bf q}=({\bf q}_1, {\bf q}_2, {\bf q}_3, {\bf q}_4),\ {\bf q}_i=(w_i,x_i,y_i,z_i),\ i=1,2,3,4,\\
w_1=r\cos\alpha, \ x_1=r\sin\alpha, \ y_1=y,\ z_1=z,\\
w_2=r\cos(\alpha+\theta),\ x_2=r\sin(\alpha+\theta),\ y_2=y, \ z_2=z,\\
w_3=r\cos(\alpha+\pi),\ x_3=r\sin(\alpha+\pi),\ y_3=y, \ z_3=z,\\
w_4=r\cos(\alpha+\theta+\pi),\ x_4=r\sin(\alpha+\theta+\pi), \ y_4=y,\ z_4=z,\\
\end{gathered}
\end{equation}
with $r,\alpha, y, z$ functions of $t$, with $r^2+y^2+z^2=1$, and $\theta\ne 0$ is a constant that gives the angle between $m_1$ and $m_2$ viewed from the centre of the corresponding Euclidean triangle. A solution of this form would obviously maintain a rectangular configuration for all time since, if $\epsilon_{ij}={\bf q}_i\cdot{\bf q}_j$, $i,j\in\{1,2,3,4\}, i\ne j$, then
$$
\epsilon_{12}=\epsilon_{34}=(1-y^2-z^2)\cos\theta+y^2+z^2 = (1-\delta z^2)\cos\theta + \delta z^2,
$$
$$
\epsilon_{13}=\epsilon_{24}=-(1-y^2-z^2)+y^2+z^2=-1+2\delta z^2,
$$
$$\epsilon_{14}=\epsilon_{23}=-(1-y^2-z^2)\cos\theta+y^2+z^2=(-1+\delta z^2)\cos\theta+\delta z^2.
$$
Moreover,
$$
\dot\alpha=\frac{c}{4m(1-y^2-z^2)}=\frac{c}{4m(1-\delta z^2)},
$$
where $\delta=\gamma^2+1$, and $\gamma=y/z$. Using Criterion 1 in \cite{Diacu05}, it follows that $r, y$, and $z$ must satisfy the system
\begin{equation}\label{diff1}
\begin{cases}
r\ddot\alpha+2\dot r\dot\alpha=0\cr
\ddot y=F(y,z,\dot y,\dot z)y\cr
\ddot z=F(y,z,\dot y,\dot z)z,
\end{cases}
\end{equation}
where
$$
F(y,z,\dot y,\dot z)=m\bigg[\frac{1-[(1-\delta z^2)\cos\theta+\delta z^2]}{(1-[(1-\delta z^2)\cos\theta+\delta z^2]^2)^{3/2}} +
\frac{1-(-1+2\delta z^2)}{(1-[-1+2\delta z^2]^2)^{3/2}}+
$$
$$
\frac{1-[(-1+\delta z^2)\cos\theta+\delta z^2]}{(1-[(-1+\delta z^2)\cos\theta+\delta z^2]^2)^{3/2}}\bigg]-
\frac{\dot y^2 + \dot z^2 - (y \dot z - \dot y z)^2}{1-\delta z^2}-\frac{c^2}{16m^2(1-\delta z^2)},
$$
and that the identity  
\begin{equation}\label{identity-pe}
\frac{mr\sin\theta}{(\epsilon_{12}^2-1)^{3/2}}=\frac{mr\sin\theta}{(\epsilon_{14}^2-1)^{3/2}}
\end{equation}
must take place. But \eqref{identity-pe} is valid only for $\theta=\pi/2$, so if solutions of the form \eqref{rpesol} exist, they must be spherical squares.

The first equation in \eqref{diff1} is identically satisfied, and from the second and third we can conclude that $y \ddot z =\ddot y z$, which implies
$
y \dot z- \dot y z = k\ {\rm (constant)}.
$
However, from the angular momentum integrals we have that $4m(y \dot z -\dot y z)=c_{yz}$, so it follows that $k=c_{yz}/4m$. But a straightforward computation shows that $c_{yz}=0$, which implies that $y \dot z -\dot y z = 0$ and, therefore, $\frac{d}{dt}\frac{y}{z}=0$ for $z\neq0$, so $\gamma=y(t)/z(t)$ is a constant. Next, we note that
\begin{equation}
\begin{split}
&\sin\alpha+\sin(\alpha+\theta)+\sin(\alpha+\pi)+\sin(\alpha+\theta+\pi)=
\\&\cos\alpha+\cos(\alpha+\theta)+\cos(\alpha+\pi)+\cos(\alpha+\theta+\pi)=0,
\end{split}
\end{equation}
from which we can conclude that $c_{wy}=c_{wz}=c_{xy}=c_{xz}=0$. This means that the rectangle has no rotation relative to any plane apart from the $wx$-plane. The energy relation 
takes the form
\begin{equation}\label{energy}
\begin{split}
&h=\frac{2m[\dot y^2 + \dot z^2 - (y \dot z - \dot y z)^2]}{1-\delta z^2}+\frac{c^2}{8m(1-\delta z^2)}-
\\&2m^2\bigg[\frac{q_{12}}{(1-q_{12}^2)^{1/2}}+\frac{q_{13}}{(1-q_{13}^2)^{1/2}}+\frac{q_{14}}{(1-q_{14}^2)^{1/2}}\bigg],
\end{split}
\end{equation}
and we can now write $F$ as
$$
F(z)=m\bigg[\frac{1-2[(1-\delta z^2)\cos\theta+\delta z^2]+[(1-\delta z^2)\cos\theta+\delta z^2]^3}{(1-[(1-\delta z^2)\cos\theta+\delta z^2]^2)^{3/2}}
$$
$$
+\frac{1-2(-1+2\delta z^2)+(-1+2\delta z^2)^3}{(1-(-1+2\delta z^2)^2)^{3/2}}
$$
$$
+\frac{1-2[(-1+\delta z^2)\cos\theta+\delta z^2]+[(-1+\delta z^2)\cos\theta+\delta z^2]^3}{(1-[(-1+\delta z^2)\cos\theta+\delta z^2]^2)^{3/2} }\bigg]-\frac{h}{2m}.
$$
So, since $y(t)$ and $z(t)$ are proportional to each other, we can reduce system \eqref{diff1} to
\begin{equation}\label{diff3}
\begin{cases}
\dot z = \nu \cr
\dot \nu = F(z)z.
\end{cases}
\end{equation}
Thus, by applying the standard existence and uniqueness theorems, we see there exists a large class of analytic positive elliptic square rotopulsators in $\mathbb S^3$ for all admissible values of the involved parameters.
\end{proof}

\begin{theorem}[{\bf Positive elliptic-elliptic rectangular orbits}] In the curved $4$-body problem in $\mathbb S^3$, there are no equal-mass positive elliptic-elliptic rectangular rotopulsators or relative equilibria.

\end{theorem}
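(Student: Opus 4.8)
The plan is to mirror the proof of the preceding theorem, but with a candidate that genuinely rotates in both coordinate planes. Following the positive elliptic-elliptic form of \cite{Diacu05}, I would place the four equal masses on the Clifford torus $r^2+\rho^2=1$ by
$$
\mathbf q_i=(r\cos(\alpha+a_i),\,r\sin(\alpha+a_i),\,\rho\cos(\beta+b_i),\,\rho\sin(\beta+b_i)),
$$
with $r,\rho,\alpha,\beta$ functions of $t$ and constant phases $(a_i)=(\phi,-\phi,-\phi,\phi)$, $(b_i)=(\psi,\psi,-\psi,-\psi)$, so that the bodies take the two $wx$-phase values $\pm\phi$ and the two $yz$-phase values $\pm\psi$. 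Computing $\epsilon_{ij}=\mathbf q_i\cdot\mathbf q_j=r^2\cos(a_i-a_j)+\rho^2\cos(b_i-b_j)$ gives $\epsilon_{12}=\epsilon_{34}=r^2\cos2\phi+\rho^2$, $\epsilon_{14}=\epsilon_{23}=r^2+\rho^2\cos2\psi$, and $\epsilon_{13}=\epsilon_{24}=r^2\cos2\phi+\rho^2\cos2\psi$, which are exactly the three rectangle relations, so the configuration stays rectangular for all time. A genuinely elliptic-elliptic rectangle forces $\phi,\psi\notin\{0,\pi/2\}$: taking either phase to be $0$ collapses the spread in one plane and returns us to the positive elliptic case of the previous theorem, while $\phi=\psi=\pi/2$ makes $\epsilon_{13}=-1$, an antipodal singularity of \eqref{both}.

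I would then substitute this candidate into \eqref{both} with $\sigma=1$ and, as in the passage from \eqref{diff1} to \eqref{identity-pe}, read off the scalar equations componentwise. Abbreviating $A=\epsilon_{12}$, $B=\epsilon_{14}$, $D=\epsilon_{13}$ and expanding each $w_j=r\cos(\alpha+a_j)$ in the basis $\{\cos(\alpha+\phi),\sin(\alpha+\phi)\}$ attached to $m_1$, the $w$-equation for $m_1$ separates into an in-phase and an out-of-phase part; only the two bodies sitting at the opposite $wx$-phase $-\phi$ feed the out-of-phase part, which reads
$$
-(2\dot r\dot\alpha+r\ddot\alpha)=mr\sin2\phi\left[\frac{1}{(1-A^2)^{3/2}}+\frac{1}{(1-D^2)^{3/2}}\right].
$$
This is the elliptic-elliptic analogue of the identity \eqref{identity-pe}, and it is the pivot of the argument.

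The crux is that its left-hand side must vanish. Each body contributes $m\,r^2\dot\alpha$ to the $wx$-angular momentum, so $c_{wx}=4m\,r^2\dot\alpha$; conservation of $c_{wx}$ gives $r^2\dot\alpha=\text{const}$, whence $2\dot r\dot\alpha+r\ddot\alpha=\frac1r\frac{d}{dt}(r^2\dot\alpha)=0$. The identity therefore collapses to
$$
mr\sin2\phi\left[\frac{1}{(1-A^2)^{3/2}}+\frac{1}{(1-D^2)^{3/2}}\right]=0,
$$
which is impossible: $m,r>0$, $\sin2\phi\neq0$, and $|A|,|D|<1$ makes the bracket strictly positive. (The same computation in the $yz$-plane yields the twin identity, with $\sin2\psi$ and $B$ in place of $\sin2\phi$ and $A$.) Because the contradiction rests only on the tangential balance and on a conserved angular-momentum component, both of which hold regardless of whether $r,\rho$ vary, it disposes of rotopulsators and, in the special case of constant radii, of relative equilibria at one stroke.

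The main obstacle is the bookkeeping of the substitution: one must re-expand all four position vectors in the single basis attached to $m_1$, separate the equations that are genuinely independent from those made redundant by the rectangle's symmetry, and check that the out-of-phase component isolates precisely the combination of $A$ and $D$ shown. Once that tangential identity is in hand, noticing that angular-momentum conservation annihilates its left-hand side converts a potentially unwieldy system into a one-line contradiction, so the delicate work is the harmonic matching rather than the final estimate; I would also record separately that each excluded phase value $\phi,\psi\in\{0,\pi/2\}$ either reverts to the positive elliptic setting already treated or violates the non-antipodal constraint carried by \eqref{both}.
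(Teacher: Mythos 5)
Your core mechanism is the same as the paper's: conservation of $c_{wx}=4mr^2\dot\alpha$ (and $c_{yz}=4m\rho^2\dot\beta$) forces the tangential combinations $r\ddot\alpha+2\dot r\dot\alpha$ and $\rho\ddot\beta+2\dot\rho\dot\beta$ to vanish, so the out-of-phase part of the gravitational force must vanish, killing a sine factor; the surviving phase values are then degenerate or singular. Within your ansatz the computation checks out: bodies $2$ and $3$ are the only ones feeding the $\sin(\alpha+\phi)$-component of the equation for ${\bf q}_1$, the bracket is strictly positive because $|A|,|D|<1$, and the argument indeed covers rotopulsators and relative equilibria simultaneously. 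This is precisely how the paper reads system \eqref{diff2rpee}.

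The genuine problem is coverage: your candidate family is not the paper's family, and it does not contain it. You put the four bodies on the $2\times2$ phase grid $\{\alpha\pm\phi\}\times\{\beta\pm\psi\}$, which is a planar rectangle inscribed in a circle of $\mathbb S^3$. The paper's candidate \eqref{rpeesol} instead has ${\bf q}_1,{\bf q}_2$ at the \emph{same} point of the $wx$-plane and \emph{antipodal} in the $yz$-plane (likewise ${\bf q}_3,{\bf q}_4$), i.e.\ relative phases $(0,0),(0,\pi),(a,b),(a,b+\pi)$. For $b\notin\{0,\pi\}$ this is a non-planar equiangular quadrilateral in which no two bodies share a $yz$-position, whereas in your grid two pairs do; that feature is invariant under relabeling and under all rotations, reflections, and plane swaps preserving the elliptic-elliptic form, so the two families coincide only in the degenerate overlap $b=\pi\leftrightarrow\psi=\pi/2$. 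Consequently your proof says nothing about the configurations the paper actually rules out (for which the paper's computation yields $a\in\{0,\pi\}$, $b=\pm\pi/2$, forcing $\epsilon_{13}=\epsilon_{24}=\epsilon_{14}=\epsilon_{23}$, i.e.\ diagonals equal to two sides); to prove the stated theorem you would need to run your harmonic-matching argument on \eqref{rpeesol} as well, or on a parametrization broad enough to contain both families. A secondary, repairable slip: you exclude $\phi,\psi\in\{0,\pi/2\}$ a priori, but the mixed case $\phi=\pi/2$, $\psi\notin\{0,\pi/2\}$ is a perfectly regular, nonsingular rectangle; it is eliminated only by your ``twin'' $yz$-identity, so the correct order is to derive both identities first, conclude $\sin2\phi=\sin2\psi=0$, and then check that all four resulting phase combinations are collisions or antipodal singularities. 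On the positive side, your computation is complementary to the paper's rather than redundant: the paper's proof, conversely, never treats your planar rectangles.
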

\begin{proof}
Consider equal masses, $m_1=m_2=m_3=m_4:=m>0$
and a candidate solution (see Definition 2 in \cite{Diacu05}) of the form
\begin{equation}\label{rpeesol}
\begin{gathered}
{\bf q}=({\bf q}_1, {\bf q}_2, {\bf q}_3, {\bf q}_4), \ {\bf q}_i=(w_i,x_i,y_i,z_i),\ i=1,2,3,4,\\
w_1=r\cos\alpha,\ x_1=r\sin\alpha,\ y_1=\rho\cos\beta,\ z_1=\rho\sin\beta,\\
w_2=r\cos\alpha,\ x_2=r\sin\alpha,\ y_2=-\rho\cos\beta,\ z_2=-\rho\sin\beta,\\
w_3=r\cos(\alpha+a),\ x_3=r\sin(\alpha+a),\ y_3=\rho\cos(\beta+b),\ z_3=\rho\sin(\beta+b),\\
w_4=r\cos(\alpha+a),\ x_4=r\sin(\alpha+a), y_4=-\rho\cos(\beta+b),\ z_4=-\rho\sin(\beta+b),\\
\end{gathered}
\end{equation}
where $r, \rho, \alpha, \beta$ are functions of $t$, with $r^2+\rho^2=1$, and $a,b\ne 0$ are constants. This candidate solution would maintain a rectangular configuration all along the motion since the quantities $\epsilon_{ij}={\bf q}_i \cdot {\bf q}_j$, $i,j\in\{1,2,3,4\}, i\neq j$, are
\begin{equation*}
\begin{split}
\epsilon_{12}=\epsilon_{34}=2r^2-1,\ \
\epsilon_{14}=\epsilon_{23}=r^2\cos a -\rho^2\cos b,\\
\epsilon_{13}=\epsilon_{24}=r^2\cos a+\rho^2\cos b,\ \ \ \ \ \ \ \ \ \ \ \ \  {}
\end{split}
\end{equation*}
as long as the diagonals, corresponding to
$\epsilon_{13}$ and $\epsilon_{24}$, are longer than
the sides, which depend on $\epsilon_{12}, \epsilon_{34}, \epsilon_{14}$, and $\epsilon_{23}$. Further computations show that
$$
\dot\alpha=\frac{c_1}{4mr^2},\ \ \dot\beta=\frac{c_2}{4m\rho^2},
$$
where $c_1=c_{wx}\ne 0$ and $c_2=c_{yz}\ne 0$. Using Criterion 2 in \cite{Diacu05}, we can conclude that the equations of motion contain, among others, the equations
\begin{equation}\label{diff2rpee}
\begin{cases}
r\ddot\alpha+2\dot r\dot\alpha=\frac{mr\sin a}{[1-(r^2\cos a + (1-r^2)\cos b)^2]^{3/2}}+
\frac{mr\sin a}{[1-(r^2\cos a - (1-r^2)\cos b)^2]^{3/2}}\cr
r\ddot\alpha+2\dot r\dot\alpha=-\frac{mr\sin a}{[1-(r^2\cos a +(1-r^2)\cos b)^2]^{3/2}}-
\frac{mr\sin a}{[1-(r^2\cos a - (1-r^2)\cos b)^2]^{3/2}}\cr
\rho\ddot\beta+2\dot\rho\dot\beta=\frac{m\rho\sin b}{[1-((1-\rho^2)\cos a +\rho^2\cos b)^2]^{3/2}}-
\frac{m\rho\sin b}{[1-((1-\rho^2)\cos a - \rho^2\cos b)^2]^{3/2}}\cr
\rho\ddot\beta+2\dot\rho\dot\beta=-\frac{m\rho\sin b}{[1-((1-\rho^2)\cos a + \rho^2\cos b)^2]^{3/2}}+
\frac{m\rho\sin b}{[1-((1-\rho^2)\cos a - \rho^2\cos b)^2]^{3/2}}.\cr
\end{cases}
\end{equation}
The above expressions of $\dot\alpha$ and $\dot\beta$ show that the left hand sides in all equations must be zero,
so the system is satisfied only for $a=0,\pi$ and $b=\pm\pi/2$. In all cases it follows that $\epsilon_{14}=\epsilon_{23}=\epsilon_{24}=\epsilon_{13}$, which implies that the diagonals are equal with two of the sides. But this is not a proper spherical rectangle, so solutions of this type don't exist. 
\end{proof}

\section{Rectangular rotopulsators in $\mathbb H^3$}

In this final section we show that the only hyperbolic rectangular rotopulsators of equal masses in $\mathbb H^3$ are the negative elliptic square rotopulsators (see Definition 3 in \cite{Diacu05}). For this we also prove that negative hyperbolic and negative elliptic-hyperbolic rectangular rotopulsators (see Definitions 4 and 5 in \cite{Diacu05}) do not exist.

\begin{theorem}[{\bf Negative elliptic rectangular rotopulsators}] In the curved $4$-body problem in $\mathbb H^3$, the only negative elliptic rectangular rotopulsators are the negative elliptic square rotopulsators. Moreover, these orbits rotate relative to the $wx$ plane, but have no rotation with respect to the planes $wy, wz, xy, xz,$ and $yz$.
\end{theorem}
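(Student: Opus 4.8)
The plan is to mirror the proof of Theorem 3, transferring it to the Lorentzian setting of $\mathbb{H}^3$, where $\cdot$ has signature $(+,+,+,-)$ and $\sigma=-1$. First I would introduce a negative elliptic candidate solution analogous to \eqref{rpesol}, placing the single elliptic rotation in the space-like $wx$-plane:
\begin{equation*}
\begin{gathered}
w_1=r\cos\alpha,\ x_1=r\sin\alpha,\ y_1=y,\ z_1=z,\\
w_2=r\cos(\alpha+\theta),\ x_2=r\sin(\alpha+\theta),\ y_2=y,\ z_2=z,\\
w_3=r\cos(\alpha+\pi),\ x_3=r\sin(\alpha+\pi),\ y_3=y,\ z_3=z,\\
w_4=r\cos(\alpha+\theta+\pi),\ x_4=r\sin(\alpha+\theta+\pi),\ y_4=y,\ z_4=z,
\end{gathered}
\end{equation*}
with $r,\alpha,y,z$ functions of $t$ obeying the hyperbolic constraint $r^2+y^2-z^2=-1$, and $\theta\ne 0$ the constant angular separation between $m_1$ and $m_2$ in the $wx$-plane. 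Evaluating the Lorentz products $\epsilon_{ij}=\mathbf{q}_i\cdot\mathbf{q}_j$ and using the constraint (so that $y^2-z^2=-1-r^2$) should confirm that the configuration stays rectangular, with $\epsilon_{12}=\epsilon_{34}=r^2(\cos\theta-1)-1$, $\epsilon_{14}=\epsilon_{23}=-r^2(\cos\theta+1)-1$, and $\epsilon_{13}=\epsilon_{24}=-1-2r^2$; note that each $\epsilon_{ij}\le -1$, so the force denominators remain real.

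Next I would read off $\dot\alpha=c/(4mr^2)$ from the $wx$-component of the angular momentum and substitute the candidate into the criterion for negative elliptic orbits in \cite{Diacu05}. Since $\sigma=-1$ turns the denominators $[\sigma-\sigma(\mathbf{q}_i\cdot\mathbf{q}_j)^2]^{3/2}$ into $(\epsilon_{ij}^2-1)^{3/2}$, the tangential balance in the $wx$-plane should reduce, exactly as in \eqref{identity-pe}, to the identity
\begin{equation*}
\frac{mr\sin\theta}{(\epsilon_{12}^2-1)^{3/2}}=\frac{mr\sin\theta}{(\epsilon_{14}^2-1)^{3/2}}.
\end{equation*}
With $r\ne 0$ and $\sin\theta\ne 0$ this forces $\epsilon_{12}^2=\epsilon_{14}^2$; substituting the expressions above, the possibility $\epsilon_{12}=-\epsilon_{14}$ leads to the impossible relation $r^2=-1$, whereas $\epsilon_{12}=\epsilon_{14}$ yields $\cos\theta=0$, i.e.\ $\theta=\pi/2$. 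Hence any negative elliptic rectangular rotopulsator must be a negative elliptic square rotopulsator.

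For the rotation statement I would argue as in Theorem 3. The telescoping of the $\sin$ and $\cos$ terms over the four equally spaced bodies makes the mixed angular-momentum components $c_{wy},c_{wz},c_{xy},c_{xz}$ vanish identically; the radial equations give $y\ddot z=\ddot y z$, so that $y\dot z-\dot y z$ is constant and equal to $c_{yz}/(4m)$, while a direct computation yields $c_{yz}=0$. Thus $y/z$ is constant, the orbit rotates only relative to the $wx$-plane, and the governing system collapses to the single autonomous pair $\dot z=\nu,\ \dot\nu=F(z)z$, with $F$ now assembled from the hyperbolic force function (its denominators being of the form $([\cdots]^2-1)^{3/2}$). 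The standard existence and uniqueness theorems then furnish a genuine family of analytic negative elliptic square rotopulsators, so existence holds as claimed.

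The main obstacle I anticipate is bookkeeping the Lorentzian signs. The constraint $r^2+y^2-z^2=-1$ replaces $r^2+y^2+z^2=1$, and the time-like role of $z$ alters several intermediate quantities (the $\epsilon_{ij}$, the kinetic term $(y\dot z-\dot y z)^2$, and the individual pieces of $F$). I would have to verify that $|\epsilon_{ij}|>1$ throughout, so that each $(\epsilon_{ij}^2-1)^{3/2}$ is real and positive, and that the solution stays on the upper sheet $z>0$. Once these sign issues are pinned down, the algebra forcing $\theta=\pi/2$ and the vanishing of the off-$wx$ angular-momentum components parallel the spherical case of Theorem 3 almost verbatim.
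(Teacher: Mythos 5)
Your proposal is correct and follows essentially the same route as the paper's proof: the identical candidate solution and Lorentz products, the observation that $\dot\alpha=c/(4mr^2)$ forces $r\ddot\alpha+2\dot r\dot\alpha=0$ so the tangential balance reduces to $\epsilon_{12}^2=\epsilon_{14}^2$ and hence $\theta=\pm\pi/2$, the same angular-momentum argument ($c_{yz}=0$, so $y/z$ is constant and $c_{wy}=c_{wz}=c_{xy}=c_{xz}=0$), and the same reduction to a planar ODE system with a standard existence argument. Your explicit case analysis (that $\epsilon_{12}=-\epsilon_{14}$ would force $r^2=-1$) merely fills in a detail the paper leaves implicit.
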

\begin{proof}
Consider a candidate solution (see Definition 3 in \cite{Diacu05}) of the form
\begin{equation}\label{rnesol}
\begin{gathered}
{\bf q}=({\bf q}_1, {\bf q}_2, {\bf q}_3, {\bf q}_4), \ {\bf q}_i=(w_i,x_i,y_i,z_i),\ i=1,2,3,4,\\
w_1=r\cos\alpha,\ x_1=r\sin\alpha,\ y_1=y,\ z_1=z,\\
w_2=r\cos(\alpha+\theta),\ x_2=r\sin(\alpha+\theta),\ y_2=y,\ z_2=z,\\
w_3=r\cos(\alpha+\pi),\ x_3=r\sin(\alpha+\pi),\ y_3=y,\ z_3=z,\\
w_4=r\cos(\alpha+\theta+\pi),\ x_4=r\sin(\alpha+\theta+\pi),\ y_4=y,\ z_4=z,\\
\end{gathered}
\end{equation}
where $r, \alpha, y, z$ are functions of $t$, with $r^2+y^2-z^2=-1$, and $\theta\ne 0$ is a constant that measures the angle between $m_1$ and $m_2$ viewed from the centre of the Euclidean rectangle. This candidate solution maintains a rectangular configuration for all time since, if $\mu_{ij}={\bf q}_i\cdot{\bf q}_j,\ i,j\in\{1,2,3,4\}, \ i\ne j$, then
$$
\begin{gathered}
\mu_{12}=\mu_{34}=r^2\cos\theta-r^2-1=(z^2-y^2-1)\cos\theta+y^2-z^2,\\
\mu_{13}= \mu_{24}=-2r^2-1=2z^2-2y^2-3,\\
\mu_{14}=\mu_{23}=-r^2\cos\theta-r^2-1=-(z^2-y^2-1)\cos\theta+y^2-z^2.
\end{gathered}
$$
Moreover,
$$
\dot\alpha=\frac{b}{4mr^2}.
$$
Criterion 3 in \cite{Diacu05} leads us to the system
\begin{equation}\label{rnediff1}
\begin{cases}
\ddot y=G(y,z,\dot y,\dot z)y\cr
\ddot z=G(y,z,\dot y,\dot z)z,\cr
r\ddot\alpha+2\dot r\dot\alpha=\frac{mr\sin\theta}{[(r^2\cos\theta-r^2-1)^2-1]^{3/2}}-\frac{mr\sin\theta}{[(-r^2\cos\theta-r^2-1)^2-1]^{3/2}}\cr
r\ddot\alpha+2\dot r\dot\alpha=-\frac{mr\sin\theta}{[(r^2\cos\theta-r^2-1)^2-1]^{3/2}}+\frac{mr\sin\theta}{[(-r^2\cos\theta-r^2-1)^2-1]^{3/2}},
\end{cases}
\end{equation}
where
\begin{equation}\label{rneG1}
\begin{split}
G(y,z,\dot y,\dot z)=&m\bigg[\frac{1+\mu_{12}}{(\mu_{12}^2-1)^{3/2}}+\frac{1+\mu_{13}}{(\mu_{13}^2-1)^{3/2}}+\frac{1+\mu_{14}}{(\mu_{14}^2-1)^{3/2}}\bigg]+
\\&\frac{\dot z^2 - \dot y^2 + (y \dot z - \dot y z)^2}{z^2-y^2-1}+\frac{b^2}{16m^2(z^2-y^2-1)}.
\end{split}
\end{equation}
The above expression of $\dot\alpha$ shows that the left hand sides of the last two equations in system \eqref{rnediff1} are zero, so those equations are identically satisfied only if $\theta=\pm\pi/2$, which implies that the candidate solution must be a spherical square.

From \eqref{rnediff1}, we know that $y \ddot z =\ddot y z$, which implies
$
y \dot z- \dot y z = k\ {\rm (constant)}.
$
However, from the integrals of angular momentum we have that $4m(y \dot z -\dot y z)=c_{yz}$, so it follows that $k=c_{yz}/4m$. A simple computation shows that $c_{yz}=0$, therefore $y \dot z -\dot y z = 0$, so $\frac{d}{dt}\frac{y}{z}=0$ for $z\neq0$. (Moreover, $c_{wx}\ne 0$ and $c_{wy}=c_{wz}=c_{xy}=c_{xz}=0.$) Consequently $z(t)=\gamma y(t)$, where $\gamma$ is constant, so we can reduce \eqref{rnediff1} to the system
\begin{equation}\label{rnediff3}
\begin{cases}
\dot z = \nu \cr
\dot \nu = G(y,z,\dot y,\dot z)z.
\end{cases}
\end{equation}
Standard results of the theory of differential equations show the existence of solutions for admissible initial value problems for the above system, so the negative elliptic spherical square candidates are the only kinds spherical rectangular solutions.
\end{proof}

\begin{theorem}[{\bf Negative hyperbolic rectangular rotopulsators}] In the curved $4$-body problem in $\mathbb H^3$, there are no negative hyperbolic rectangular rotopulsators.
\end{theorem}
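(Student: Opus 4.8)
The plan is to transplant to a hyperbolic rotation the mechanism that eliminated the positive elliptic-elliptic orbits and that pinned down the negative elliptic ones. First I would take the candidate negative hyperbolic rotopulsator from Definition 4 in \cite{Diacu05}: its single rotation is hyperbolic, hence acts in a plane containing the timelike coordinate $z$, and the four bodies are separated by a constant hyperbolic angle $\theta\ne 0$ while retaining the rectangular shape. Computing the Lorentz inner products $\mu_{ij}=\mathbf q_i\cdot\mathbf q_j$, I expect them to collapse to three rectangular values in which the separation enters through $\cosh\theta$, exactly where $\cos\theta$ appeared in the preceding theorem. Reading the hyperbolic rotation rate off the matching angular-momentum integral yields the analog of the relation $\dot\alpha=b/(4mr^2)$ used there.

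Next I would invoke Criterion 4 in \cite{Diacu05} to produce the equations of motion and isolate the two equations governing the hyperbolic rotation, the counterparts of the last two lines of \eqref{rnediff1}. As before, the rotation-rate relation forces their common left-hand side $r\ddot\alpha+2\dot r\dot\alpha$ to vanish, so the pair reduces to a single identity of the shape
\[
\sinh\theta\left[\frac{1}{(\mu_{12}^2-1)^{3/2}}-\frac{1}{(\mu_{14}^2-1)^{3/2}}\right]=0 ,
\]
with $\sinh\theta$ now in the role previously played by $\sin\theta$. Since $\theta\ne 0$ gives $\sinh\theta\ne 0$ and $r\ne 0$, this can hold only if $\mu_{12}^2=\mu_{14}^2$.

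The decisive and most delicate step is to show that $\mu_{12}^2=\mu_{14}^2$ is incompatible with a proper rectangle. I expect $\mu_{12}$ and $\mu_{14}$ to agree except in the sign of their $\cosh\theta$ term, so that $\mu_{12}^2-\mu_{14}^2$ factors as a nonzero multiple of $\cosh\theta$ and the equality forces $\cosh\theta=0$. In the negative elliptic case the analogous requirement was $\cos\theta=0$, satisfied by the square $\theta=\pi/2$; its hyperbolic shadow $\cosh\theta=0$ has no real solution because $\cosh\theta\ge 1$ for every $\theta$. The candidate equations of motion are therefore inconsistent for all admissible hyperbolic angles, and no negative hyperbolic rectangular rotopulsator exists. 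The point demanding care is the sign bookkeeping in the Lorentz product: because $\cosh$ is even, a pure hyperbolic shift cannot by itself flip the sign of the $\cosh\theta$ term, so one must track the auxiliary coordinate reflections built into Definition 4 to confirm that the obstruction really is the unsolvable $\cosh\theta=0$. Should that bookkeeping instead leave the two rotation equations with a common sign, the identity would read $\sinh\theta[\,\cdots\,]=0$ with a strictly positive bracket, forcing the excluded value $\theta=0$; in either outcome the conclusion is nonexistence.
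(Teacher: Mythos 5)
Your proposal is correct, and its mechanism is the same as the paper's: the angular-momentum relation $\dot\beta=a/(4m\eta^2)$ forces the left-hand sides $\eta\ddot\beta+2\dot\eta\dot\beta$ of the two hyperbolic-rotation equations produced by Criterion 4 of \cite{Diacu05} to vanish identically, and the resulting algebraic condition is incompatible with a proper rectangle. The one point worth settling is which branch of your case analysis actually occurs: it is your fallback, not your main line. In Definition 4 the two bodies separated from $m_1$ by the hyperbolic angle $\phi$ are images of each other under the reflection $(w,x)\mapsto(-w,-x)$, so, $\cosh$ being even, both $\phi$-dependent inner products carry the term $-\eta^2\cosh\phi$ with the \emph{same} sign, namely $\nu_{13}=(\eta^2-1)-\eta^2\cosh\phi$ and $\nu_{14}=-(\eta^2-1)-\eta^2\cosh\phi$ (also note the indexing: $\nu_{12}=1-2\eta^2$ is $\phi$-free here, unlike $\mu_{12}$ in the negative elliptic case). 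Consequently the two rotation equations have right-hand sides
\[
\pm\, m\eta\sinh\phi\left[\frac{1}{(\nu_{13}^2-1)^{3/2}}+\frac{1}{(\nu_{14}^2-1)^{3/2}}\right],
\]
a strictly positive bracket multiplied by $\sinh\phi$, not a difference of reciprocals; setting them to zero forces $\sinh\phi=0$, i.e., the excluded value $\phi=0$. Your primary obstruction $\cosh\phi=0$ never materializes, and the condition $\mu_{12}^2=\mu_{14}^2$ is never what the equations demand. Since you explicitly anticipated exactly this sign structure and noted that it too yields nonexistence, your argument, read along that branch, coincides with the paper's proof.
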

\begin{proof}
Consider a solution candidate (see Definition 4 in \cite{Diacu05}) of the form
\begin{equation}\label{rnhsol}
\begin{gathered}
{\bf q}=({\bf q}_1, {\bf q}_2, {\bf q}_3, {\bf q}_4),\ {\bf q}_i=(w_i,x_i,y_i,z_i), \ i=1,2,3,4,\\
w_1=w,\ x_1=x, \ y_1=\eta\sinh\beta,\ z_1=\eta\cosh\beta,\\
w_2=-w,\ x_2=-x,\ y_2=\eta\sinh\beta,\ z_2=\eta\cosh\beta,\\
w_3=w,\ x_3=x, \ y_3=\eta\sinh(\beta+\phi),\ z_3=\eta\cosh(\beta+\phi),\\
w_4=-w,\ x_4=-x,\ y_4=\eta\sinh(\beta+\phi),\ z_4=\eta\cosh(\beta+\phi),\\
\end{gathered}
\end{equation}
where $w,x,\eta,\beta$ are functions of $t$, $\phi\ne 0$ is a constant that measures the angle between $m_1$ and $m_2$, and $w^2+x^2-\eta^2=-1$. This candidate solution
maintains a rectangular orbit since, if $\nu_{ij}={\bf q}_i\cdot{\bf q}_j$, then
$$
\begin{gathered}
\nu_{12}=\nu_{34}=1-2\eta^2=-2w^2-2x^2-1,\\
\nu_{13}= \nu_{24}=\eta^2-1-\eta^2\cosh\phi=w^2+x^2-(w^2+x^2+1)\cosh\phi,\\
\nu_{14}=\nu_{23}=-\eta^2+1-\eta^2\cosh\phi-w^2-x^2-(w^2+x^2+1)\cosh\phi.
\end{gathered}
$$
Notice also that
$$
\dot\beta=\frac{a}{4m\eta^2}=\frac{a}{4m(w^2+x^2+1)}.
$$
Criterion 4 in \cite{Diacu05} shows that the equations of motion take the form
\begin{equation}\label{rnhdiff1}
\begin{cases}
\ddot w_1=K(w,x,\dot w,\dot x)w_1\cr
\ddot w_2=K(w,x,\dot w,\dot x)w_2\cr
\ddot x_1=K(w,x,\dot w,\dot x)x_1\cr
\ddot x_2=K(w,x,\dot w,\dot x)x_2\cr
\eta\ddot\beta+2\dot\eta\dot\beta=\frac{m\eta\sinh\phi}{[(\eta^2-1-\eta^2\cos\phi)^2-1]^{3/2}}+\frac{m\eta\sinh\phi}{[(\eta^2-1+\eta^2\cos\phi)^2-1]^{3/2}}\cr
\eta\ddot\beta+2\dot\eta\dot\beta=-\frac{m\eta\sinh\phi}{[(\eta^2-1-\eta^2\cos\phi)^2-1]^{3/2}}-\frac{m\eta\sinh\phi}{[(\eta^2-1+\eta^2\cos\phi)^2-1]^{3/2}},
\end{cases}
\end{equation}
where 
$$
K(w,x,\dot w,\dot x)=m\bigg[\frac{1+\nu_{12}}{(\nu_{12}^2-1)^{3/2}}+\frac{1+\nu_{13}}{(\nu_{13}^2-1)^{3/2}}+\frac{1+\nu_{14}}{(\nu_{14}^2-1)^{3/2}}\bigg ]+
$$
$$
\frac{\dot w^2 + \dot x^2 + (w \dot x - \dot w x)^2}{w^2+x^2+1}+\frac{a^2}{16m^2(w^2+x^2+1)}.
$$
Notice that for the above expression of $\dot\beta$, the left hand sides of the last two equations in \eqref{rnhdiff1} are zero, so the right hand sides must be zero too. But that happens only for $\phi=0$, which does not correspond to a proper hyperbolic rectangle. This remark completes the proof.
\end{proof}

\begin{theorem}[{\bf Negative elliptic-hyperbolic rectangular rotopulsators}] In the curved $4$-body problem in $\mathbb H^3$, there are no negative elliptic-hyperbolic rectangular rotopulsators. 
\end{theorem}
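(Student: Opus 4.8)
The plan is to follow the template of the negative elliptic and negative hyperbolic cases (Theorems 6 and 7), but now with a candidate that carries \emph{one} elliptic rotation in the $wx$-plane and \emph{one} hyperbolic rotation in the Lorentzian $yz$-plane, as prescribed by Definition 5 in \cite{Diacu05}. Concretely, I would look for a solution of the form
\begin{equation*}
\begin{gathered}
w_1=r\cos\alpha,\ x_1=r\sin\alpha,\ y_1=\eta\sinh\beta,\ z_1=\eta\cosh\beta,\\
w_2=r\cos(\alpha+a),\ x_2=r\sin(\alpha+a),\ y_2=\eta\sinh\beta,\ z_2=\eta\cosh\beta,\\
w_3=r\cos\alpha,\ x_3=r\sin\alpha,\ y_3=\eta\sinh(\beta+b),\ z_3=\eta\cosh(\beta+b),\\
w_4=r\cos(\alpha+a),\ x_4=r\sin(\alpha+a),\ y_4=\eta\sinh(\beta+b),\ z_4=\eta\cosh(\beta+b),
\end{gathered}
\end{equation*}
with $r,\eta,\alpha,\beta$ functions of $t$, the constraint $r^2-\eta^2=-1$ (which is exactly $w^2+x^2+y^2-z^2=-1$ since $\sinh^2\beta-\cosh^2\beta=-1$), and $a,b\ne 0$ constants measuring the angular separations in the two rotation planes.

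First I would verify that this candidate keeps a rectangular configuration for all time by computing the Lorentz inner products $\nu_{ij}={\bf q}_i\cdot{\bf q}_j$. A short computation using the addition formulas for $\cos$ and $\cosh$ gives $\nu_{12}=\nu_{34}=r^2\cos a-\eta^2$, $\nu_{14}=\nu_{23}=r^2\cos a-\eta^2\cosh b$, and $\nu_{13}=\nu_{24}=r^2-\eta^2\cosh b$, so opposite sides are pairwise equal and the diagonals are equal, as required for a proper rectangle (provided the diagonals dominate the sides). Next I would read off from the angular momentum integrals the two conserved quantities associated with the elliptic and the hyperbolic rotation, obtaining expressions of the form $\dot\alpha=c_1/(4mr^2)$ and $\dot\beta=c_2/(4m\eta^2)$, with $c_1=c_{wx}$ and $c_2=c_{yz}$. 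Feeding the candidate into Criterion 5 of \cite{Diacu05} then produces the equations of motion, among which appear two sign-opposite pairs: one pair of the form $r\ddot\alpha+2\dot r\dot\alpha=\pm\, mr\sin a\,[\cdots]$ and one pair of the form $\eta\ddot\beta+2\dot\eta\dot\beta=\pm\, m\eta\sinh b\,[\cdots]$, where the bracketed factors are strictly positive combinations of the $\nu_{ij}$.

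The decisive step is then immediate: since $r^2\dot\alpha=c_1/(4m)$ and $\eta^2\dot\beta=c_2/(4m)$ are constant, we have $r\ddot\alpha+2\dot r\dot\alpha=r^{-1}\tfrac{d}{dt}(r^2\dot\alpha)=0$ and likewise $\eta\ddot\beta+2\dot\eta\dot\beta=0$ identically, so both right-hand sides must vanish. The hyperbolic pair therefore forces $\sinh b=0$, hence $b=0$, which collapses the hyperbolic rotation and leaves no proper hyperbolic rectangle, exactly as $\sinh\phi=0$ killed Theorem 7. I expect the main obstacle to be purely bookkeeping: setting up the candidate in the precise normalization of Definition 5 and carrying Criterion 5 through with the correct signs, so that the two $\beta$-equations genuinely appear with opposite right-hand sides (which is what forces the common right-hand side to be zero). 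Once that sign structure is confirmed, the conclusion is clean and in fact sharper than in the elliptic-elliptic case, because $\sinh$ has no nonzero real root; thus $b=0$ is forced directly and degeneracy follows without any further analysis of the limiting configuration. This completes the argument that no negative elliptic-hyperbolic rectangular rotopulsators exist.
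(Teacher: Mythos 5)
Your proposal is correct and takes essentially the same route as the paper: feed the elliptic-hyperbolic candidate into Criterion 5 of \cite{Diacu05}, use the angular-momentum integrals (constancy of $r^2\dot\alpha$ and $\eta^2\dot\beta$) to force the left-hand side of the hyperbolic-angle equation $\eta\ddot\beta+2\dot\eta\dot\beta$ to vanish, and observe that the right-hand side is a strictly positive multiple of $\sinh$ of the hyperbolic separation, so that separation must be $0$, which is degenerate. The only difference is that your candidate keeps the elliptic separation $a$ as a free constant, whereas the paper fixes it to $\pi$ (bodies $2$ and $4$ antipodal to $1$ and $3$ in the $wx$-plane); this is a harmless (indeed slightly more general) variation that does not change the argument.
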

\begin{proof}
Consider a solution candidate (see Definition 5 in \cite{Diacu05}) of the form
\begin{equation}\label{rnehsol}
\begin{gathered}
{\bf q}=({\bf q}_1, {\bf q}_2, {\bf q}_3, {\bf q}_4),\ {\bf q}_i=(w_i,x_i,y_i,z_i),\ i=1,2,3,4,\\
w_1=r\cos\alpha,\ x_1=r\sin\alpha,\ y_1=\eta\sinh\beta,\ z_1=\eta\cosh\beta,\\
w_2=-r\cos\alpha,\ x_2=-r\sin\alpha,\ y_2=\eta\sinh\beta,\ z_2=\eta\cosh\beta,\\
w_3=r\cos\alpha,\ x_3=r\sin\alpha,\ y_3=\eta\sinh(\beta+\phi),\ z_3=\eta\cosh(\beta+\phi),\\
w_4=-r\cos\alpha,\ x_4=-r\sin\alpha,\ y_4=\eta\sinh(\beta+\phi),\ z_4=\eta\cosh(\beta+\phi),\\
\end{gathered}
\end{equation}
with $r, \eta, \alpha, \beta$ functions of $t$, $r^2-\eta^2=-1$, $z_i \geq 1$, $i=1,2,3,4$, and $\phi\ne 0$. This candidate solution maintains a rectangular configuration since, if $\delta_{ij}={\bf q}_i\cdot{\bf q}_j$, then
$$
\begin{gathered}
\delta_{12}=\delta_{34}=-2r^2-1, \ \
\delta_{13}=\delta_{24}=r^2-\eta^2\cosh\phi,\\
\delta_{14}=\delta_{23}=-r^2-\eta^2\cosh\phi.\\
\end{gathered}
$$
Also notice that
$$
\dot\alpha=\frac{d_1}{4mr^2},\ \dot\beta=\frac{d_2}{4m\eta^2}.
$$
The equations of motion reduce to the system
\begin{equation}\label{rnehdiff4}
\begin{cases}
\dot r=\mu \cr
\dot \mu= r\Big\{(1+r^2)\big[\frac{d_1^2}{16m^2 r^4}-\frac{d_2^2}{16m^2(1 + r^2)^2}\big] +\frac{\dot r^2}{1+r^2} +
\frac{m[-2(1+r^2)]}{(\delta_{12}^2-1)^{3/2}} +\cr  {}\hspace{1cm}\frac{m[(1+r^2)(1-\cosh \phi)]}{(\delta_{13}^2-1)^{3/2}} + \frac{m[-(1+r^2)(1+\cosh \phi)]}{(\delta_{14}^2-1)^{3/2}}\Big\}\cr
r\ddot\alpha+2\dot r\dot\alpha=0\cr
\eta\ddot\beta+2\dot\eta\dot\beta=\frac{m\eta\sinh\phi}{[(r^2-\eta^2\cos\phi)^2-1]^{3/2}}+\frac{m\eta\sinh\phi}{[(r^2+\eta^2\cos\phi)^2-1]^{3/2}}.\cr
\end{cases}
\end{equation}
To prove the nonexistence of the rectangular orbits it is enough to focus on the last equation above. The expression of $\dot\beta$ shows that the left hand side of this equation is zero. But the right hand side is zero only for $\phi=0$, which does not correspond to a proper hyperbolic rectangle. This remark completes the proof.
\end{proof}

\noindent{\bf Acknowledgements.} The authors acknowledge the partial support of a Discovery Grant (Florin Diacu) and an USRA Fellowship (Brendan Thorn), both awarded by NSERC of Canada.

\end{document}